\documentclass[12pt]{article}

\usepackage[T1]{fontenc}
\usepackage[textwidth=16.6cm,textheight=23.2cm,centering]{geometry}
\usepackage{amsmath,amssymb,amsthm,mathtools}
\usepackage{newtxtext,newtxmath}
\usepackage{microtype}
\usepackage[hidelinks]{hyperref}
\usepackage[nameinlink,capitalise,noabbrev]{cleveref}

\newtheorem{theorem}{Theorem}[section]
\newtheorem{lemma}[theorem]{Lemma}
\newtheorem{fact}[theorem]{Fact}
\newtheorem{proposition}[theorem]{Proposition}
\newtheorem{definition}[theorem]{Definition}

\theoremstyle{remark}

\crefname{theorem}{Theorem}{Theorems}
\crefname{lemma}{Lemma}{Lemmas}
\crefname{proposition}{Proposition}{Propositions}
\crefname{corollary}{Corollary}{Corollaries}

\newcommand{\bbP}{\mathbb{P}}
\newcommand{\bbE}{\mathbb{E}}

\newcommand{\ex}{\operatorname{ex}}

\renewcommand{\labelenumi}{\theenumi}

\newcommand{\hide}[1]{}
\newcommand{\e}{\varepsilon}

\begin{document}

\title{An improved bound on the minimum size of\\ Tur\'an $(r+1,r)$-systems}
\author{Jun Gao\footnote{Academy of Mathematics and Systems Science, Chinese Academy of Sciences, Beijing 100084, China. Email:
jungao@amss.ac.cn}
\and
Peiru Kuang\footnote{School of Mathematical Sciences, Shanghai Jiao Tong University, Shanghai 200240, China. Email: peiru\_k@sjtu.edu.cn} 
\and
Oleg Pikhurko\footnote{Mathematics Institute and DIMAP,
University of Warwick,
Coventry CV4 7AL, UK. Email: o.pikhurko@warwick.ac.uk}
\and
Yan Wang\footnote{School of Mathematical Sciences, Shanghai Jiao Tong University, Shanghai 200240, China. Email: yan.w@sjtu.edu.cn}}

\date{}

\maketitle

\begin{abstract}
For positive integers \(n\ge s>r\), let \(T(n,s,r)\) denote the minimum number of edges in an \(r\)-uniform hypergraph on \(n\) vertices such that every \(s\)-set of vertices contains at least one edge. A simple averaging argument shows that the ratio $T(n,s,r)/\binom nr$ is non-decreasing in $n$ and we denote its limit as $n\to\infty$ by $t(s,r)$.

The case $s=r+1$ has a rich history, with the previously best known asymptotic bounds for $r\to\infty$  being $1\le r\cdot t(r+1,r)\le 4.91...$\,.
In this paper, we present a simple probabilistic construction which shows that $(r+2)\cdot t(r+1,r)\le 4$ for every $r\ge1$. We also derandomise it and discuss applications to covering codes.
\end{abstract}

\section{Introduction}

Given an integer \(r\ge 1\), an \emph{\(r\)-uniform hypergraph} (or briefly an \emph{\(r\)-graph}) is a family of \(r\)-subsets of a vertex set. For integers
\(n\ge s>r\ge 1\), a \emph{Tur\'an \((n,s,r)\)-system} is an \(r\)-graph
\(G\subseteq\binom{[n]}r\) such that every \(s\)-subset of \([n]\)
contains at least one edge of \(G\). The minimum possible size of such an $r$-graph is denoted by \(T(n,s,r)\). Observe that 
$$
 T(n,s,r)=\binom nr-\ex(n,K_s^{(r)}),
$$
where \(\ex(n,K_s^{(r)})\) is the maximum number of edges in an \(n\)-vertex \(r\)-graph without \(K_s^{(r)}\), the complete $r$-graph on $s$ vertices.
A simple averaging argument shows that
\begin{equation}\label{eq:monotone}
\frac{T(n,s,r)}{\binom nr}\le \frac{T(n+1,s,r)}{\binom {n+1}r},\quad\text{for every $n\ge s$.}
\end{equation}
In particular, it follows that the limit
$$
 t(s,r):=\lim_{n\to\infty}\frac{T(n,s,r)}{\binom nr}
$$
exists and, by~\eqref{eq:monotone}, we have $T(n,s,r)\le t(s,r) \binom nr$ for every $n\ge s$. 

In the trivial case when $r = 1$, it holds that $T(n,s,1) = n-s+1$ for any $n \geq s> 1$. For \(r=2\), the problem is resolved by Tur\'an's theorem \cite{Turan1941}, with the case when \(s=3\) proved earlier by Mantel \cite{Mantel1907}. For \(s>r\ge3\), however, the value of \(t(s,r)\) is not known for any pair \((s,r)\). Erd\H{o}s
\cite[Section~III.1]{Erdos1981} offered \$500 for determining $t(s,r)$ for a single pair $s>r\ge3$ and \$1000 for resolving the problem completely, but these remain unclaimed.
Tur\'an and other researchers conjectured (see e.g.~\cite[Conjecture~5]{Sidorenko1995}) that \(t(s,3)=4/(s-1)^2\) for every \(s\ge4\). In the first open case, the conjecture asserts that \(t(4,3)=4/9\), while the current best lower bound is \(t(4,3)\ge0.438...\) due to Razborov \cite{Razborov2010}. Much less is known for uniformity at least four. For the first such case, Giraud~\cite{Giraud1990} constructed a family showing that
$
t(5,4)\le {5}/{16}=0.3125,
$
whereas Markstr\"om~\cite{Markstrom2009} proved that
$T(17,5,4)\ge627$. This directly implies that $t(5,4)\ge 627/\binom{17}{4}=0.2634...$ while an integer rounding trick for $n=18$ slightly improves this to $t(5,4)\ge 807/\binom{18}4>0.2637$, see~Sidorenko~\cite[Section~2]{Sidorenko2021}.
We refer to
\cite{deCaen1994,Furedi1991,Keevash2011,Sidorenko1995}
for further background on hypergraph Tur\'an problems.

In this paper, we focus on the case when \(s=r+1\). A simple double-counting argument (or~\eqref{eq:monotone}) gives \(t(r+1,r)\ge1/(r+1)\), while the stronger estimate \(t(r+1,r)\ge1/r\) was independently obtained by de Caen \cite{deCaenExtension1983}, Sidorenko \cite{Sidorenko1982}, and Tazawa and Shirakura \cite{TazawaShirakura1983}. Chung and Lu \cite{ChungLu1999} sharpened this bound for odd $r$, and Lu and Zhao \cite{LuZhao2009} for even $r$.

In the other direction, 
the successive upper bounds \(O(r^{-1/2})\), \((1+2\ln r)/r\), \((\ln r+O(1))/r\), and \((1+o(1))\ln r/(2r)\) on $t(r+1,r)$  as $r\to\infty$ were shown respectively by Sidorenko \cite{Sidorenko1981}, Kim and Roush \cite{KimRoush1983}, Frankl and R\"odl \cite{FranklRodl1985}, and Sidorenko \cite{Sidorenko1997}. De Caen \cite[p.~190]{deCaen1994} conjectured that \(r\cdot t(r+1,r)\to\infty\) and
offered 500 Canadian dollars for its resolution. Pikhurko~\cite{Pikhurko2025} disproved the conjecture by showing that $(r+1)\cdot t(r+1,r)\le 6.239$
for every \(r\ge1\) and that \(6.239\) can be replaced by \(4.911\) for all sufficiently large \(r\).

The construction in \cite{Pikhurko2025} is recursive. In this paper, we give a simple probabilistic construction yielding the following improvement.

\begin{theorem}\label{thm:main}
Let $n>r\ge1$ be integers. Then
$$
 T(n,r+1,r)
 \le
 \frac{2}{1+\lfloor(r+1)/2\rfloor}\binom nr
 \le
 \frac{4}{r+2}\binom nr.
$$
\end{theorem}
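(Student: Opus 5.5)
The plan is to build, for each $n$, a random $r$-graph $G$ on $[n]$ that is always a Tur\'an $(n,r+1,r)$-system and has $\bbE|G|\le \frac{2}{1+\lfloor (r+1)/2\rfloor}\binom nr$. Some outcome then has at most this many edges, which gives the first inequality. The second inequality is elementary: $1+\lfloor (r+1)/2\rfloor\ge (r+2)/2$, so $\frac{2}{1+\lfloor(r+1)/2\rfloor}\le\frac{4}{r+2}$. The randomness only has to make every $r$-set equally likely to be an edge, so that $\bbE|G|=p\binom nr$, where $p$ is the probability that a fixed $r$-set is an edge.

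For the construction I will place the vertices at independent uniform points $x_v$ on the circle $\mathbb{R}/\mathbb{Z}$. An $r$-set $e$ then cuts the circle into $r$ arcs (gaps). An $(r+1)$-set $S$ cuts it into $r+1$ gaps $a_1,\dots,a_{r+1}$ in cyclic order. Deleting the vertex between $a_i$ and $a_{i+1}$ gives the $r$-set $S\setminus\{v\}$, whose gaps are the $a_j$ with $a_i,a_{i+1}$ merged into $a_i+a_{i+1}$. The edge rule is to be a condition on the gap vector of $e$ with two properties.

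First, it must be \emph{certified by the merge}. Let $i$ be a cyclic index maximising $a_i+a_{i+1}$, or a suitable refinement of this choice. Then $S$ minus the shared endpoint satisfies the rule, because every pair of adjacent gaps of $e$ other than the merged one was already an adjacent pair in $S$. This makes covering of every $(r+1)$-set automatic, and ties occur with probability $0$.

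Second, it must be \emph{rare}. By exchangeability of uniform spacings, the gap vector of a random $r$-set is Dirichlet$(1,\dots,1)$. So $p$ equals $r$ times the probability that the first gap is the distinguished ``merged-looking'' gap. The natural first attempt, ``the largest gap $L$ of $e$ is at least every adjacent sum $g_j+g_{j+1}$ of the other gaps'', is certified by the merge but is too weak: for $r=3$ it gives $p=3/4>2/3$. I would therefore strengthen it by exploiting more of the structure of the maximising index. One option is to orient the circle and also compare $L$ with the sums that straddle it, namely $g_{\rm left}+g_{\rm right}$ with neighbours taken as in $S$. Another is to use only every other gap, which is where $\lfloor (r+1)/2\rfloor$ should come from: a cyclic sequence of $r+1$ gaps contains about $(r+1)/2$ disjoint adjacent pairs.

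The main obstacle is choosing the rule so that the probability computation comes out to exactly $2/(1+\lfloor(r+1)/2\rfloor)$. The target suggests the following mechanism. Among the $\lfloor (r+1)/2\rfloor$ disjoint merged candidates plus one further comparison, the edge is declared only when the merged gap wins a ``tournament'' among $1+\lfloor(r+1)/2\rfloor$ exchangeable quantities, with a factor $2$ for the two orientations (or endpoints). I would first test the candidate rules by computing the Dirichlet probabilities for $r=2,3,4$ against the targets $1$, $2/3$, $2/3$. I would then prove the general bound by a symmetry argument: exhibit $1+\lfloor (r+1)/2\rfloor$ exchangeable events, exactly one of which occurs, such that ``$e$ is an edge'' is contained in the union of two of them. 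If the continuous model resists, the fallback is to replace the circle by random colourings $c:[n]\to\mathbb{Z}_m$ with $m=1+\lfloor(r+1)/2\rfloor$. In that version an $r$-set is an edge when $\sum_{v\in e}c(v)$ lies in a fixed $2$-element set, and covering is checked by showing that the $r+1$ values $\sum_S c-c(v)$ must hit that set, after restricting the colour distribution so that this holds.
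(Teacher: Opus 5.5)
\textbf{There is a genuine gap.} Your averaging framework and the second inequality (via $1+\lfloor (r+1)/2\rfloor\ge (r+2)/2$) are fine. However, the proposal never supplies what the theorem actually needs: an explicit edge rule that covers every $(r+1)$-set and has edge probability at most $2/(1+\lfloor (r+1)/2\rfloor)$.

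The only concrete circle rule you give has $p=3/4>2/3$ already for $r=3$. The suggested strengthenings (straddling sums, ``every other gap'', a ``tournament'') are never specified, so neither covering nor the probability can be checked.

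The fallback does not work either. Every colouring, including a monochromatic one, occurs with positive probability, and a monochromatic $(r+1)$-set of colour $c$ has all its $r$-subsets summing to $rc$. So you need $rc\in A$ for every $c$ in the support. With uniform colours on $\mathbb{Z}_m$ this forces $A\supseteq\{rc: c\in\mathbb{Z}_m\}$, a set of size $m/\gcd(r,m)$. For $r=4$, $m=3$ this is all of $\mathbb{Z}_3$, and for $r=5$, $m=4$ it is all of $\mathbb{Z}_4$. Once you restrict the colour distribution to avoid this, $\sum_{v\in e}c(v)$ is no longer uniform, so the probability $2/m$ is no longer automatic, and you give no rule for which it can be verified.

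The missing idea is where the exchangeable labels live and which sets are compared. Your heuristic (``the edge is declared only when a designated object wins among $1+\lfloor(r+1)/2\rfloor$ exchangeable ones, with a factor $2$ for two orientations'') is exactly right. But the labels should come from a uniformly random total order $\preceq$ on $\binom{[n]}{k-1}$ with $k=\lfloor(r+1)/2\rfloor$, not from gaps or vertex colours. Put $e$ in $G$ if the initial $(k-1)$-subset $L_{k-1}(e)$, in the standard order on $[n]$, $\preceq$-precedes all $k$ of the $(k-1)$-subsets of the final block $R_k(e)$, or symmetrically with left and right swapped.

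Since $L_{k-1}(e)\cap R_k(e)=\emptyset$, these $k+1$ sets are distinct, so each condition has probability exactly $1/(k+1)$.

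For covering, take an $(r+1)$-set $X$. The blocks $L_k(X)$ and $R_k(X)$ are disjoint because $2k\le r+1$. Let $W$ be the $\preceq$-minimum among all $(k-1)$-subsets of either block, say $W\subseteq L_k(X)$, and delete the unique vertex of $L_k(X)\setminus W$. The resulting $e$ has $L_{k-1}(e)=W$ and $R_k(e)=R_k(X)$. So the family compared in the first condition for $e$ is a subfamily of the one over which $W$ was minimal, and $e$ satisfies that condition.

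This ``delete inside one block, keep the other block intact'' mechanism is what your merge-of-gaps idea was aiming for. Without an explicit rule of this kind, your argument does not establish the bound.
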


We also give an explicit deterministic construction that gives the same upper bound on $t(r+1,r)$.

\begin{theorem}\label{thm:main-explicit}
Let $r\ge2$ be fixed. For all sufficiently large $n$, there is an explicit construction of a Tur\'an $(n,r+1,r)$-system $G$ such that
$$
 |G|
 \le
 \left(\frac{2}{1+\lfloor (r+1)/2\rfloor}+o_n(1)\right)\binom nr.
$$
Moreover, it can be decided in time polynomial in $r\log_2 n$ whether a given $r$-subset of $[n]$ is an edge of $G$.
\end{theorem}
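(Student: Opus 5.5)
The plan is to derandomise the construction behind \cref{thm:main} rather than design a new one. I expect that construction to have the following shape: each vertex $v\in[n]$ receives an independent uniformly random label $x_v$, taken from a finite set $L$ (or from $[0,1]$), and an $r$-set $S$ is declared an edge according to a fixed rule $\mathcal R$. The rule depends only on the labels of the vertices of $S$, listed in the natural order of $[n]$. Two features of such a construction are essential. First, the covering property is \emph{deterministic}: for every labelling, every $(r+1)$-set $T$ contains some $T\setminus\{u\}$ satisfying $\mathcal R$. Second, for a single $r$-set the probability of being an edge is at most $2/(1+\lfloor (r+1)/2\rfloor)$, which is then summed by linearity of expectation. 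So the only task is to exhibit a \emph{specific} labelling under which the proportion of $r$-sets satisfying $\mathcal R$ is at most this probability plus $o_n(1)$.

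\textbf{Step 1 (discretisation).} If the random labels in the proof of \cref{thm:main} are continuous, I would first replace them by a finite alphabet $L=\mathbb Z_q$ with $q$ fixed. The aim is to ensure that the rule still certifies that every $(r+1)$-set contains an edge for \emph{all} labellings, including those with ties. I would try to arrange this by a consistent tie-breaking convention, for instance breaking ties by vertex index. If discretisation changes the edge probability at all, the change should be $O(r^2/q)$, which can be made as small as desired by taking $q$ large. \textbf{I expect this to be the main obstacle.} One must check that the combinatorial argument guaranteeing an edge inside each $(r+1)$-set survives the tie-breaking. If it does not, I would instead let $q=q(n)\to\infty$ slowly, so that the labelling in Step 2 is injective on most $r$-sets, and put all $r$-sets that do contain a tie into $G$. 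There are only $O(r^2/q)\binom nr=o\bigl(\binom nr\bigr)$ such sets.

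\textbf{Step 2 (explicit labelling).} Define $x_v:=v \bmod q$. Fix an order pattern $1\le v_1<\dots<v_r\le n$ and a target tuple $(a_1,\dots,a_r)\in\mathbb Z_q^r$. The number of such $r$-sets with $v_i\equiv a_i \pmod q$ for all $i$ equals $\binom nr q^{-r}\,(1+O(rq/n))$. This follows by writing $v_i=a_i+q y_i$ and counting increasing sequences $y_1\le\dots\le y_r$, where the boundary effects of equal $y$'s are of lower order. Hence the fraction of $r$-sets satisfying $\mathcal R$ is
$$
\bbP(\text{a random }r\text{-set is an edge})+o_n(1)\le \frac{2}{1+\lfloor (r+1)/2\rfloor}+o_n(1).
$$
By the deterministic covering property from Step 1, the resulting $G$ is a Tur\'an $(n,r+1,r)$-system.

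\textbf{Step 3 (membership test).} Given an $r$-set $S\subseteq[n]$, sort its elements in $O(r\log r)$ comparisons of $\log_2 n$-bit integers. Then compute each residue $v\bmod q$ in time polynomial in $\log_2 n$, and evaluate $\mathcal R$ on the resulting $r$-tuple of labels. The rule $\mathcal R$ from the construction is a simple condition on $r$ labels, for example a comparison or counting condition, so it can be evaluated in time polynomial in $r\log q$. Since $q\le n$, the total running time is polynomial in $r\log_2 n$.
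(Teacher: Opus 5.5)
\textbf{There is a genuine gap.} Your Steps 2 and 3 are sound. Step~2 is the equidistribution of residues of order statistics, and Step~3 is the membership test. But the whole argument rests on a guess about the random construction of \cref{thm:main} that is false. That construction does not assign i.i.d.\ labels to vertices and apply a fixed rule. It takes a uniformly random total order $\preceq$ on $\binom{[n]}{k-1}$, where $k=\lfloor (r+1)/2\rfloor$. The bound $2/(k+1)$ comes from the fact that the $k+1$ distinct sets $L_{k-1}(e)$ and $\binom{R_k(e)}{k-1}$ are equally likely to be $\preceq$-minimal.

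For $k\ge 3$ (i.e.\ $r\ge5$) these sets overlap heavily. Once the order on $(k-1)$-sets is induced by vertex labels, their ranks are no longer exchangeable. For example, take $k=3$, i.i.d.\ exponential labels, and order pairs by the real sum of their labels. Then Condition~(i) holds with probability $23/75>1/4$, and the two conditions together give $46/75>1/2$.

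So the inequality $\bbP(\text{edge})\le 2/(1+\lfloor(r+1)/2\rfloor)$ in your Step~2 has no source. You never specify $\mathcal R$, and the proof of \cref{thm:main} supplies no rule of your shape. Finding such a rule is the entire content of the derandomisation. Only for $k=2$ does your picture apply directly, since a random order on single vertices is the same as i.i.d.\ continuous vertex labels.

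\textbf{The missing idea} is an explicit order on $(k-1)$-sets that behaves randomly on exactly these overlapping configurations. The paper sets $f(S)\equiv\sum_{x\in S}x \pmod q$, orders sets by $f$, and breaks ties lexicographically. This is a total order on $\binom{[n]}{k-1}$, so \cref{lem:turan-system} gives the Tur\'an property for free. The obstacle you flagged in Step~1 is therefore not an obstacle.

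The real work is showing that, for i.i.d.\ uniform residues, $f(L_{k-1}(e))$ and the $k$ values $f(R_k(e)\setminus\{z_i\})$ are independent and uniform on $[q]^{k+1}$. For the overlapping sets this holds because $(w_1,\dots,w_k)\mapsto(\sum_{j\ne i}w_j)_{i\in[k]}$ is a bijection of $(\mathbb Z/q\mathbb Z)^k$ when $\gcd(q,k-1)=1$ (\cref{lem:deletion-map}). This gives $\bbP(E_L)\approx q^{-(k+1)}\sum_{j\le q}j^k\le\frac1{k+1}(1+\frac1q)^{k+1}$. The tie-breaking costs a factor $1+O(k/q)$, which forces $q\to\infty$. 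At the same time $q=o(n)$ keeps the transfer from i.i.d.\ residues to $v \bmod q$ valid, exactly as in your Step~2.
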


The remainder of the paper is organised as follows. Section~\ref{se:codes} discusses applications of the new bounds to covering codes.
We prove Theorems \ref{thm:main} and \ref{thm:main-explicit} in Sections~\ref{sec:proof} and~\ref{sec:explicit} respectively. 
Section~\ref{sed:LargerGap} extends the construction of Theorem~\ref{thm:main} to Tur\'an $(s,r)$-systems with $s\ge r+2$.

\hide{
\medskip
\noindent
\textbf{Organisation.}
The remainder of the paper is organised as follows. In
\cref{sec:proof}, we prove \cref{thm:main}. In
\cref{sec:explicit}, we give an explicit ordering and prove \cref{thm:main-explicit}. Finally, in \cref{sed:LargerGap}, we discuss an extension to larger gaps.
}

\section{Applications to covering codes}
\label{se:codes}

Tur\'an systems have direct connections to various kinds of covering codes. For example, Pikhurko, Verbitsky and Zhukovskii in \cite[Lemma~5]{PikhurkoVerbitskyZhukovskii2025} and a subsequent revision of their preprint~\cite[Theorem~18]{PikhurkoVerbitskyZhukovskii2026arXiv} showed that $t(s,r)$ is equal to the minimum asymptotic density as $n\to\infty$ of an \emph{$(s-r)$-insertion covering code} on $[n]^r$, which is a collection of $r$-words over the alphabet $[n]$ such that every $s$-word can be obtained from some codeword by inserting $s-r$ symbols. Thus our improved bounds on $t(r+1,r)$ directly yield improved bounds on code densities; we refer to \cite{LenzRashtchianSiegelYaakobi2021,PikhurkoVerbitskyZhukovskii2025} for overviews of the previously known bounds.
A similar conclusion holds for the so-called \emph{stopping redundancy}, introduced by Schwartz and Vardy~\cite{SchwartzVardy2006}. For a \emph{maximum distance separable (MDS) linear $[n,n-d+1,d]$-code}, Han and Siegel~\cite[Theorem~22]{HanSiegel2007} showed that, for every fixed $d$, its stopping redundancy is $(1+O(1/n))T(n,d-1,d-2)$.

As our final example, let us discuss asymmetric binary covering codes. Their formal connection to Tur\'an systems was not explicitly stated in the literature, so we present it here. 
An \emph{asymmetric binary covering code} of radius $\rho$ is a family $\mathcal D\subseteq2^{[n]}$ such that, for every $A\subseteq[n]$, there is $D\in\mathcal D$ satisfying $A\subseteq D$ and $|D\setminus A|\le \rho$, that is, every subset of $[n]$ can be obtained from a codeword by deleting at most $\rho$ elements. Let $K^+(n,\rho)$ denote the minimum cardinality of such a code. The following result shows how upper bounds on $t(r+\rho,r)$ translate into upper bounds on $K^+(n,\rho)$.

\begin{lemma}\label{lm:AsymmetricCodes} 
For every fixed integer $\rho\ge 1$ it holds that
\begin{equation}
 K^+(n,\rho)
 \le
 \left(\rho!\, 2^\rho \mu+o(1)\right)\frac{2^n}{n^\rho},\quad \text{as $n\to\infty$},\label{eq:8}
\end{equation}
where $\mu:=\limsup_{r\to\infty}\binom{r+\rho}{\rho}\cdot t(r+\rho,r)$.
 \end{lemma}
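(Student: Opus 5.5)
Fix $\varepsilon>0$. The plan is to build the code level by level, using the complement trick. A family $\mathcal D$ covers $A$ precisely when some $D\in\mathcal D$ with $D\supseteq A$ has $|D\setminus A|\le\rho$. Pass to complements: write $a=n-|A|$ and $\bar A=[n]\setminus A$. We need some $\bar D\subseteq \bar A$ with $|\bar D|\ge a-\rho$. Hence, for each level $k$ (the size of $\bar D$), it suffices that every $(k+\rho)$-subset of $[n]$ contains some $\bar D$ with $|\bar D|=k$. This is exactly a Tur\'an $(n,k+\rho,k)$-system. Sets with $a<\rho$ are handled by $\bar D=\emptyset$, that is, $D=[n]$.

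A Tur\'an system is only needed on the levels $k$ that are actually used. For $|\bar A|=a$, we choose the target level $k=a-\rho$ only when $a-\rho\equiv$ some residue, and we allow any size between $a-\rho$ and $a$. So the plan is to use levels $k\equiv c \pmod{\rho+1}$. For each $a\ge\rho$ there is then a unique $k\in[a-\rho,a]$ in that class. We also need every $a$-set to contain an edge of the level-$k$ system. If $k<a-\rho$ this would fail, but with a step of $\rho+1$ we always have $a-\rho\le k\le a$. If $k=a-\rho+j$, then an $a$-set contains a $(k+\rho)$-set only when $j=0$. So instead we take the level-$k$ family $G_k$ to be a Tur\'an $(n,k+\rho,k)$-system. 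For $a\ge k$ with $a\le k+\rho$, any $a$-set extends to a $(k+\rho)$-set, but we need containment in the other direction. This fails, so the fix is to require $a\ge k+\rho$. Each $a$-set contains a $(k+\rho)$-subset, hence an edge $\bar D$ of size $k\ge a-\rho$ only if $a=k+\rho$. Hence the single-class idea fails, and we must use all levels $k$, each with $G_k$ a Tur\'an $(n,k+\rho,k)$-system.

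Next we bound the total size and the gain factor. We have $|\mathcal D|\le 1+\sum_k T(n,k+\rho,k)\le 1+\sum_k t(k+\rho,k)\binom nk$, using $T(n,s,r)\le t(s,r)\binom nr$ from~\eqref{eq:monotone}. The mass of $\binom nk$ concentrates at $k=n/2\pm O(\sqrt n)$. There $t(k+\rho,k)\le(\mu+\varepsilon)/\binom{k+\rho}{\rho}\approx(\mu+\varepsilon)\rho!\,2^\rho/n^\rho$. Summing gives $(\rho!2^\rho\mu+o(1))2^n/n^\rho$. The tail $|k-n/2|>n^{2/3}$ contributes $2^n e^{-\Omega(n^{1/3})}$, using the crude bound $t\le1$. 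That tail is negligible.

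This computation shows that using every level loses no factor. The only delicate points are the uniformity of $\limsup$ (which is fine, since $k\to\infty$ in the window) and the Chernoff tail. I expect the main care to be in justifying that $\binom{k+\rho}{\rho}^{-1}=(1+o(1))\rho!\,2^\rho n^{-\rho}$ uniformly for $k$ in the window.
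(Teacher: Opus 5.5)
Your proposal is correct and, once the abandoned residue-class detour in your second paragraph is discarded, it is exactly the paper's construction: take complements of Tur\'an $(n,k+\rho,k)$-systems on every level $k$, together with $[n]$. The only difference is how you bound $\sum_k \binom nk/\binom{k+\rho}{\rho}$. You use binomial concentration with a Chernoff tail and uniform asymptotics of $\binom{k+\rho}{\rho}$ for $k$ near $n/2$, whereas the paper uses the identity $\binom nk/\binom{k+\rho}{\rho}=\binom{n+\rho}{k+\rho}/\binom{n+\rho}{\rho}$ to bound the sum by $2^{n+\rho}/\binom{n+\rho}{\rho}$ directly, covering levels $k\le r_0$ by the trivial $O(n^{r_0})$ term, which avoids the window analysis entirely.
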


\begin{proof}
Given $\e>0$, fix an integer $r_0$ such that $\binom{r+\rho}{\rho}\cdot t(r+\rho,r)\le \mu+\e$ for every $r> r_0$.

Let $n$ be sufficiently large and let $\mathcal G_r\subseteq\binom{[n]}r$, for each $1\le r\le n-\rho$, be a Tur\'an $(n,r+\rho,r)$-system of size $T(n,r+\rho,r)$, which is at most $t(r+\rho,r)\binom {n}{r}$ by~\eqref{eq:monotone}. Set $\mathcal G_0:=\{\emptyset\}$. Define
$$
 \mathcal D
 :=
 \bigl\{[n]\setminus E:
 E\in\mathcal G_r\text{ for some }0\le r\le n-\rho\bigr\}.
$$
Take any $A\subseteq[n]$, and let $B:=[n]\setminus A$. If $|B|\le \rho$, then $A$ can be obtained from $[n]\in\mathcal D$ by removing at most $\rho$ elements. Otherwise, there is an edge $E\in\mathcal G_{|B|-\rho}$ such that $E\subseteq B$. Hence $[n]\setminus E$ contains $A$ and differs from it in at most $\rho$ elements. Thus $\mathcal D$ is an asymmetric binary covering code of
radius~$\rho$.

By the definitions of $r_0$ and $\mu$, we have
$$
 |\mathcal D|
 \le \sum_{r=0}^{r_0}\binom nr+
  \sum_{r=0}^{n}\frac{\mu+\e}{\binom{r+\rho}{\rho}}\, \binom nr
 \le O(n^{r_0})+(\mu+\e)\frac{ 2^{n+\rho}}{\binom{n+\rho}{\rho}}.
$$
Since $\e>0$ was arbitrary, this implies the bound claimed by the lemma.\end{proof}

Cooper, Ellis, and Kahng \cite{CooperEllisKahng2002} proved in particular that, for every $n\ge1$,
$$
 \left\lceil
 \frac{n\cdot 2^{n+1}+2}{(n+1)(n+2)}
 \right\rceil
 \le K^+(n,1)
 \le
 \frac{\gamma_1\cdot 2^{n}}{n},
$$
for some constant $\gamma_1$. Their proof of~\cite[Corollary 8]{CooperEllisKahng2002}, as stated, requires \(\alpha_1\ge 2\) and \(\beta_1\ge 10.713...\) (the unique root of $\beta=4(\ln (\beta/2)+1)$ with $\beta\ge 4$) while the resulting constant is \(\gamma_1=2\beta_1\). On the other hand, \cref{thm:main} and Lemma~\ref{lm:AsymmetricCodes} imply that
\[ K^+(n,1)
 \le
 \left(8+o(1)\right)\frac{2^n}{n},\quad \text{as $n\to\infty$}.\]

\section{Proof of Theorem~\ref{thm:main}}\label{sec:proof}

In this section, we prove \cref{thm:main}. Let $n>r\ge1$, and set $k=\lfloor(r+1)/2\rfloor$. Our aim is to prove that
$$
 T(n,r+1,r)
 \le
 \frac{2}{k+1}\binom nr.
$$
If $k=1$, the desired inequality is just the trivial bound
$T(n,r+1,r)\le\binom nr$. Thus we may assume that $k \ge 2$.

First, we need some notation.
Let $\le$ denote the standard order on $[n]$. For a subset $X=\{x_1<\cdots<x_m\}$ of $[n]$ and an integer $0\le \ell\le m$, define
\begin{equation}\label{eq:DefLR}
 L_\ell(X):=\{x_1,\ldots,x_\ell\}
 \quad\text{and}\quad
 R_\ell(X):=\{x_{m-\ell+1},\ldots,x_m\}
\end{equation}
 to be its initial and final $\ell$-subsets.

\begin{definition}[Set system $G(n,r,\preceq)$]\label{def:set-system}
For a total order $\preceq$ on $\binom{[n]}{k-1}$, let the $r$-graph $G(n,r,\preceq)$ consist of those 
$e\in\binom{[n]}r$ for which at least one of the following conditions is satisfied:
\begin{enumerate}
\renewcommand{\labelenumi}{(\roman{enumi})}
\item\label{cond:left}
$L_{k-1}(e) \preceq Y$ for every $Y \in \binom{R_{k}(e)}{k-1}$, or
\item\label{cond:right} 
$R_{k-1}(e) \preceq Z$ for every $Z \in \binom{L_k(e)}{k-1}$.
\end{enumerate}
\end{definition}

Condition (i) says that the initial $(k-1)$-subset of $e$ precedes all $(k-1)$-subsets of its final $k$-set under $\preceq$. Condition (ii) is the analogous requirement with the initial and final parts interchanged.

We first verify that the construction always gives a desired Tur\'an system, regardless of the choice of the total order $\preceq$.
\begin{lemma}\label{lem:turan-system}
For any total order $\preceq$ on $\binom{[n]}{k-1}$,
the \(r\)-graph $G(n,r,\preceq)$ is a Tur\'an
\((n,r+1,r)\)-system.
\end{lemma}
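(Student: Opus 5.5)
The plan is to fix an arbitrary $(r+1)$-set $S=\{s_1<\cdots<s_{r+1}\}\subseteq[n]$ and find one element $x\in S$ such that $e:=S\setminus\{x\}$ satisfies condition~(i) or~(ii) of \cref{def:set-system}. The key observation is that $r+1\ge 2k$, because $k=\lfloor (r+1)/2\rfloor$. Hence the initial $k$-set $L_k(S)$ and the final $k$-set $R_k(S)$ are disjoint subsets of $S$.

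Consider the family $\mathcal F:=\binom{L_k(S)}{k-1}\cup\binom{R_k(S)}{k-1}$ of $(k-1)$-sets. Let $W$ be its $\preceq$-minimum element, which exists because $\preceq$ is a total order and $\mathcal F$ is finite and nonempty. By the disjointness, $W$ lies in exactly one of the two parts. By symmetry (reversing the order on $[n]$ swaps (i) and (ii)), assume $W\subseteq L_k(S)$. Let $x$ be the unique element of $L_k(S)\setminus W$, and set $e:=S\setminus\{x\}$.

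It remains to check the structure of $e$.
\begin{itemize}
\item Removing an element from among the first $k$ elements of $S$ leaves $W$ as the first $k-1$ elements of $e$, so $L_{k-1}(e)=W$.
\item Since $x\notin R_k(S)$ and $|e|=r\ge 2k-1$, the last $k$ elements of $e$ are exactly those of $S$, so $R_k(e)=R_k(S)$.
\end{itemize}
By the minimality of $W$ in $\mathcal F$, we get $W\preceq Y$ for every $Y\in\binom{R_k(e)}{k-1}$. This is precisely condition~(i), so $e\in G(n,r,\preceq)$ and $e\subseteq S$. The case $W\subseteq R_k(S)$ is symmetric: delete the element of $R_k(S)\setminus W$ and verify condition~(ii).

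The only delicate point is the index bookkeeping. We need $2k\le r+1$ so that deleting $x$ from one end does not disturb the $k$-set at the other end. We also need $r\ge 2k-1$ so that $L_k(e)$ and $R_k(e)$ are defined. Both inequalities follow from the choice of $k$, and the case $k=1$ is trivial, since then $L_0(e)=\emptyset$ and condition~(i) holds for every $e$.
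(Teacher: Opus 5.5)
Your proposal is correct and follows the paper's proof exactly: take the $\preceq$-minimum $W$ of $\binom{L_k(S)}{k-1}\cup\binom{R_k(S)}{k-1}$, assume $W\subseteq L_k(S)$ by symmetry, and keep $W$ together with everything of $S$ outside $L_k(S)$, so that condition (i) holds. Your $e=S\setminus\{x\}$ is precisely the paper's $W\cup R_k(X)$ or $W\cup\{x_{k+1}\}\cup R_k(X)$, written uniformly for both parities of $r+1$.
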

\begin{proof}
It is enough to show that every $(r+1)$-subset $X=\{x_1<\cdots<x_{r+1}\}$ of $[n]$ contains a member of $G(n,r,\preceq)$. 
Since $2k\le r+1$, the sets $L_k(X)$ and $R_k(X)$ are disjoint. Let $W$ be the minimum member of
$$
 \binom{L_k(X)}{k-1}\cup\binom{R_k(X)}{k-1}
$$
with respect to $\preceq$.

Assume by symmetry that $W\in\binom{L_k(X)}{k-1}$. If $r+1=2k$, set $e:=W\cup R_k(X)$. If $r+1=2k+1$, set
$e:=W\cup\{x_{k+1}\}\cup R_k(X)$. In both cases $|e|=r$,
$L_{k-1}(e)=W$, and $R_{k}(e)=R_k(X)$. Moreover, the minimality of $W$ gives $W\preceq Y$ for every $Y\in\binom{R_k(e)}{k-1}$.
Thus $e$ satisfies Condition~\ref{cond:left} of Definition \ref{def:set-system} and thus belongs to $G(n,r,\preceq)$.\hide{
If $W\in\binom{R_k(X)}{k-1}$, we argue symmetrically. Namely, take $e=L_k(X)\cup W$ when $r+1=2k$, and take
$e=L_k(X)\cup\{x_{k+1}\}\cup W$ when $r+1=2k+1$. Then
$L_{k}(e)=L_k(X)$, $R_{k-1}(e)=W$, and $e$ satisfies Condition~\ref{cond:right}. In either case, $X$ contains an edge of $G(n,r,\preceq)$.}
\end{proof}

We next show that a uniformly random choice gives the desired bound in expectation.

\begin{lemma}\label{lem:random}
Suppose that $k>1$, and let $\preceq$ be a uniformly random total order on $\binom{[n]}{k-1}$. Then 
\[
 \bbE\,|G(n,r,\preceq)|
 \le \frac{2}{k+1}\binom nr.
\]
\end{lemma}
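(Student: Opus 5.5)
Fix an $r$-set $e=\{e_1<\cdots<e_r\}$ and estimate $\bbP(e\in G(n,r,\preceq))$; linearity of expectation then gives the bound, provided every edge has probability at most $2/(k+1)$. By the union bound,
\[
\bbP(e\in G)\le \bbP(\text{(i) holds})+\bbP(\text{(ii) holds}).
\]

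Consider condition (i). Since $r\ge 2k-1$, the sets $L_{k-1}(e)$ and $R_k(e)$ are disjoint: $(k-1)+k=2k-1\le r$. Hence $L_{k-1}(e)$ is a $(k-1)$-set distinct from all $k$ members of $\binom{R_k(e)}{k-1}$. These are $k+1$ distinct elements of $\binom{[n]}{k-1}$, and under a uniformly random total order each of them is equally likely to be the smallest. Condition (i) says precisely that $L_{k-1}(e)$ is the minimum among these $k+1$ sets, so $\bbP(\text{(i)})=1/(k+1)$. By the symmetric argument, $\bbP(\text{(ii)})=1/(k+1)$.

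Therefore $\bbP(e\in G)\le 2/(k+1)$. Summing over all $\binom nr$ sets $e$ gives
\[
\bbE\,|G(n,r,\preceq)|\le \frac{2}{k+1}\binom nr.
\]

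The only step needing care is the distinctness of the $k+1$ sets, which is exactly where $2k-1\le r$ is used; the assumption $k>1$ ensures the sets are nonempty, so the order on $\binom{[n]}{k-1}$ is meaningful. One could sharpen the bound by subtracting the probability that (i) and (ii) hold simultaneously, but the union bound already suffices.
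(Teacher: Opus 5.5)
Your proof is correct and follows essentially the same route as the paper. Disjointness of $L_{k-1}(e)$ and $R_k(e)$ (from $2k-1\le r$) gives $k+1$ distinct sets, each equally likely to be $\preceq$-minimal, so each condition has probability $1/(k+1)$, and the union bound with linearity of expectation finishes. The paper additionally notes that (i) and (ii) are disjoint events, since (i) forces $L_{k-1}(e)\prec R_{k-1}(e)$ and (ii) forces $R_{k-1}(e)\prec L_{k-1}(e)$, so the bound actually holds with equality.
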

\begin{proof}
Fix $e\in\binom{[n]}r$. The set $L_{k-1}(e)$ is disjoint from $R_{k}(e)$, so the $(k+1)$ members of
\[
 \{L_{k-1}(e)\}\cup\binom{R_{k}(e)}{k-1}
\]
are distinct. Each of them is equally likely to be the minimum one under a uniformly random total order $\preceq$. Hence the probability that Condition~\ref{cond:left} holds is exactly $\frac{1}{k+1}$. By symmetry the same probability applies to Condition~\ref{cond:right}. Thus the probability that $e\in G(n,r,\preceq)$ is at most $2/(k+1)$. (In fact, it is equality since these two events are disjoint: Condition~\ref{cond:left} implies $L_{k-1}(e)\prec R_{k-1}(e)$, whereas Condition~\ref{cond:right} implies $R_{k-1}(e)\prec L_{k-1}(e)$.)
Therefore, the lemma follows by summing these probabilities over all
$e\in\binom{[n]}r$.
\end{proof}
Now we are ready to prove Theorem~\ref{thm:main}.
\begin{proof}[Proof of Theorem~\ref{thm:main}]
Choose a total order $\preceq$ on $\binom{[n]}{k-1}$ uniformly at random. By Lemma~\ref{lem:random}, there is a total order $\preceq'$ such that
\[
 |G(n,r,\preceq')|
 \le\frac{2}{k+1}\binom nr.
\]
By Lemma~\ref{lem:turan-system}, the $r$-graph
$G(n,r,\preceq')$ is a Tur\'an $(n,r+1,r)$-system.\end{proof}

\section{Proof of Theorem \ref{thm:main-explicit}}\label{sec:explicit}

We now give an explicit order which attains the same asymptotic upper bound as the random order. Fix $r\ge3$ and, as before, set $k:=\lfloor(r+1)/2\rfloor$. For all sufficiently large $n$, choose an integer $q$ such that
$\ln n\le q\le2\ln n$ and $\gcd(q,k-1)=1$. Such a choice is possible for all sufficiently large $n$: For example, we can take the unique integer congruent to $1$ modulo $k-1$ in the interval $[\lceil\ln n\rceil,\lceil\ln n\rceil+k-2]$.

We represent the residue classes modulo $q$ by
$[q]=\{1,\ldots,q\}$, with the zero residue represented by $q$. Define $f\colon\binom{[n]}{k-1}\to[q]$ by letting $f(S)$ be the unique element of $[q]$ such that
\[
 f(S)\equiv\sum_{x\in S}x\pmod q.
\]
Let $\preceq_{\rm lex}$ be the ordinary lexicographic order on the increasing $(k-1)$-tuples representing the members of $\binom{[n]}{k-1}$. For $S,T\in\binom{[n]}{k-1}$, define $S\preceq_q T$ if either $f(S)<f(T)$, or $f(S)=f(T)$ and $S\preceq_{\rm lex}T$. Thus $\preceq_q$ is a total order; the lexicographic order only breaks ties between sets with the same $f$-value.

We need two elementary facts about residues. The first one records the asymptotic equidistribution of the residues of the order statistics of a uniformly chosen set.

\begin{fact}\label{fact:residues}
Let $\ell\ge1$ be fixed, let $I$ be an interval of $N\to\infty$ consecutive integers, and let $q=o(N)$. If
$\{w_1<\cdots<w_\ell\}$ is chosen uniformly at random from
$\binom I\ell$, then, uniformly over
$(\xi_1,\ldots,\xi_\ell)\in[q]^\ell$,
$$
 \bbP\bigl(w_i\equiv\xi_i\pmod q\text{ for every }i\in[\ell]\bigr)
 =q^{-\ell}\left(1+O_\ell\left(\frac qN\right)\right).
$$
\end{fact}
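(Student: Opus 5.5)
The plan is to count directly. First I reduce to a statement about the number of $\ell$-subsets of $I$ whose order statistics have prescribed residues. Write $I=\{a+1,\ldots,a+N\}$ and fix $(\xi_1,\ldots,\xi_\ell)\in[q]^\ell$. Each $\ell$-subset $\{w_1<\cdots<w_\ell\}$ corresponds bijectively to its gap vector $(g_0,g_1,\ldots,g_\ell)$. Here $g_0=w_1-a-1\ge0$, $g_i=w_{i+1}-w_i-1\ge0$ for $1\le i<\ell$, and $g_\ell=a+N-w_\ell\ge 0$, with $\sum g_i=N-\ell$. The residue conditions $w_i\equiv\xi_i$ for all $i$ are equivalent to the following: $g_0$ lies in one fixed residue class mod $q$, determined by $\xi_1$ and $a$, and each $g_i$ for $1\le i<\ell$ lies in one fixed residue class, determined by $\xi_{i+1}-\xi_i$. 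The last gap $g_\ell$ is then free. So the count is the number of solutions of $g_0+\cdots+g_\ell=N-\ell$ in nonnegative integers with $g_0,\ldots,g_{\ell-1}$ each restricted to a prescribed residue class mod $q$.

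Next I count these solutions. Write $g_i=c_i+qh_i$ for $i<\ell$, where $c_i\in\{0,\ldots,q-1\}$ is the prescribed residue and $h_i\ge0$. For fixed $(h_0,\ldots,h_{\ell-1})$ there is exactly one admissible $g_\ell$ provided $\sum_{i<\ell}(c_i+qh_i)\le N-\ell$, and none otherwise. So the count equals the number of lattice points $(h_0,\ldots,h_{\ell-1})\in\mathbb Z_{\ge0}^\ell$ with $\sum h_i\le M$, where $M=\lfloor (N-\ell-\sum c_i)/q\rfloor$. That number is $\binom{M+\ell}{\ell}$. Since $0\le\sum c_i\le \ell q$, we have $M=N/q+O_\ell(1)$. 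Because $q=o(N)$, $N/q\to\infty$, and therefore
$$\binom{M+\ell}{\ell}=\frac{(N/q)^\ell}{\ell!}\Bigl(1+O_\ell\bigl(\tfrac qN\bigr)\Bigr).$$
Meanwhile $\binom N\ell=\frac{N^\ell}{\ell!}\bigl(1+O_\ell(1/N)\bigr)$, and $1/N\le q/N$. Dividing gives the probability $q^{-\ell}\bigl(1+O_\ell(q/N)\bigr)$. All the implied constants depend only on $\ell$, so the estimate is uniform in $(\xi_i)$ and in the position $a$ of the interval.

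The only step needing any care is the gap-to-residue translation. I have to check that prescribing the residues of all $w_i$ fixes the residues of $g_0,\ldots,g_{\ell-1}$ and leaves $g_\ell$ unconstrained, which holds because the map is a bijection. I also have to check that the binomial-ratio error is $O_\ell(q/N)$ uniformly, which follows from $M\ge N/q-\ell-1$ together with $(1+O(\ell q/N))^\ell=1+O_\ell(q/N)$ once $q/N$ is small. For the finitely many small $N/q$ the bound is trivial after enlarging the constant.
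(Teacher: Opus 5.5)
Your argument is correct, but it takes a different route from the paper. The paper argues by induction on $\ell$. It shows that the number of increasing tuples with prescribed residues is $\frac{N^\ell}{\ell!q^\ell}+O_\ell\bigl(\frac{N^{\ell-1}}{q^{\ell-1}}+1\bigr)$ by summing the $(\ell-1)$-case over the $N/q+O(1)$ admissible values of $w_\ell$ and using the asymptotics of power sums over an arithmetic progression. It then divides by $\binom N\ell$. You instead pass to gap vectors, where the residue conditions become congruence conditions on $g_0,\ldots,g_{\ell-1}$ with $g_\ell$ free. This gives the exact count $\binom{M+\ell}{\ell}$ with $M=\lfloor (N-\ell-\sum_i c_i)/q\rfloor$. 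Your route buys an exact closed form, so no error term has to be carried through an induction. The paper's inductive step implicitly needs the $(\ell-1)$-estimate uniformly over the subintervals $\{a+1,\ldots,w_\ell-1\}$ of varying length, which you avoid entirely. Uniformity in $(\xi_1,\ldots,\xi_\ell)$ and in the position of $I$ is also immediate for you, since the count depends on the residues only through $\sum_i c_i$. The paper's summation over the last order statistic is the more generic technique: it adapts to weighted counts or to constraints with no closed form. The price is error-term bookkeeping, which the paper only sketches.
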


\begin{proof}
For fixed residues $\xi_1,\ldots,\xi_\ell$, a direct induction on $\ell$ gives
$$
 \bigl|\{(w_1,\ldots,w_\ell)\in I^\ell:
 w_1<\cdots<w_\ell,\ w_i\equiv\xi_i\pmod q\}\bigr|
 =\frac{N^\ell}{\ell!q^\ell}
  +O_\ell\left(\frac{N^{\ell-1}}{q^{\ell-1}}+1\right).
$$
Indeed, for $\ell=1$ this is the fact that each residue class occurs $N/q+O(1)$ times in $I$. For the induction step, sum the estimate for $w_1<\cdots<w_{\ell-1}<w_\ell$ over the possible $N/q+O(1)$ values of $w_\ell$ in its prescribed residue class. The standard estimate for a sum of a fixed power over an arithmetic progression gives the
displayed main term and error term. Since $\binom N\ell=N^\ell/\ell!+O_\ell(N^{\ell-1})$, division by $\binom N\ell$ proves the fact.
\end{proof}

\begin{lemma}\label{lem:deletion-map}
Suppose that $\gcd(q,k-1)=1$. The map
\[
 \begin{split}
 \Phi\colon(\mathbb Z/q\mathbb Z)^k&\longrightarrow
 (\mathbb Z/q\mathbb Z)^k,\\
 (w_1,\ldots,w_k)&\longmapsto
 \left(\sum_{j\ne1}w_j,\ldots,\sum_{j\ne k}w_j\right)
 \end{split}
\]
is a bijection.
\end{lemma}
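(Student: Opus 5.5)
Since $\Phi$ maps the finite set $(\mathbb Z/q\mathbb Z)^k$ to itself, it suffices to show it is injective, or equivalently, to write down an explicit inverse. We will do the latter.

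Let $\sigma:=\sum_{j=1}^k w_j$. Then the $i$-th coordinate of $\Phi(w)$ is $u_i:=\sigma-w_i$. Summing over $i\in[k]$ gives
\[
 \sum_{i=1}^k u_i = k\sigma-\sigma=(k-1)\sigma .
\]
Because $\gcd(q,k-1)=1$, the residue $k-1$ is invertible modulo $q$. Hence $\sigma=(k-1)^{-1}\sum_i u_i$ is determined by $u=\Phi(w)$, and then so is each $w_i=\sigma-u_i$.

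Concretely, we define
\[
 \Psi(u_1,\ldots,u_k):=\Bigl((k-1)^{-1}\textstyle\sum_{j}u_j-u_1,\ \ldots,\ (k-1)^{-1}\sum_{j}u_j-u_k\Bigr).
\]
The argument above shows $\Psi\circ\Phi=\mathrm{id}$, so $\Phi$ is injective and therefore bijective. Alternatively, one can check directly that $\Phi\circ\Psi=\mathrm{id}$: the sum of the coordinates of $\Psi(u)$ is $k(k-1)^{-1}\sum_j u_j-\sum_j u_j=(k-1)^{-1}\sum_j u_j$, so the $i$-th coordinate of $\Phi(\Psi(u))$ is $(k-1)^{-1}\sum_j u_j$ minus the $i$-th coordinate of $\Psi(u)$, which equals $u_i$.

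There is no real obstacle here. The only point needing care is the invertibility of $k-1$ modulo $q$, which is exactly the hypothesis $\gcd(q,k-1)=1$. Equivalently, $\Phi$ is the linear map with matrix $J-I$, whose determinant $(-1)^{k-1}(k-1)$ is a unit in $\mathbb Z/q\mathbb Z$.
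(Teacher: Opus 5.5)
Your proposal is correct and follows essentially the same route as the paper: sum the coordinates to get $(k-1)\sigma$, use $\gcd(q,k-1)=1$ to recover $\sigma$, and then read off $w_i=\sigma-u_i$, giving the same explicit inverse. Your extra check that $\Phi\circ\Psi=\mathrm{id}$ and the determinant remark are also correct, though not needed.
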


\begin{proof}
Let $(y_1,\ldots,y_k)=\Phi(w_1,\ldots,w_k)$ and set
$s:=\sum_{j=1}^k w_j$. Then $y_i=s-w_i$ for every $i\in [k]$, and hence
$$
 \sum_{i=1}^k y_i=(k-1)s.
$$
Since $k-1$ is invertible modulo $q$, the value of $s$ is determined by the vector $(y_1,\ldots,y_k)$, and then
$$
 w_i=(k-1)^{-1}\sum_{j=1}^k y_j-y_i,
 \quad\text{for every }i\in[k].
$$
This gives the inverse of $\Phi$.
\end{proof}

\begin{lemma}\label{lem:explicit}
The $r$-graph $G(n,r,\preceq_q)$ is a Tur\'an $(n,r+1,r)$-system and satisfies
$$
 \frac{|G(n,r,\preceq_q)|}{\binom nr}
 \le
 \frac{2}{k+1}\left(1+\frac1q\right)^{k+1}+o(1)
 =\frac{2}{k+1}+o(1).
$$
\end{lemma}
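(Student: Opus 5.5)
The Tur\'an property needs no new work: $\preceq_q$ is a total order on $\binom{[n]}{k-1}$, so \cref{lem:turan-system} applies directly. The content is the density bound. As in \cref{lem:random}, I would bound the probability that a uniformly random $e\in\binom{[n]}r$ satisfies Condition~(i) by roughly $\frac1{k+1}(1+1/q)^{k+1}$, and then use the symmetric argument for Condition~(ii). Condition~(i) depends only on the $2k-1$ elements of $L_{k-1}(e)\cup R_k(e)$. If Condition~(i) holds, then
\[
f(L_{k-1}(e))\le \min_{Y\in\binom{R_k(e)}{k-1}} f(Y),
\]
and it suffices to bound the probability of this weaker event. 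The lexicographic tie-breaking only makes the event smaller, so it can be ignored.

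Write $R_k(e)=\{w_1,\dots,w_k\}$. The $k$ members of $\binom{R_k(e)}{k-1}$ have $f$-values given by the coordinates of $\Phi(w_1,\dots,w_k)$ modulo $q$. Set $a:=\sum_{x\in L_{k-1}(e)}x \bmod q$. The plan is to show that the vector of residues
\[
(a,y_1,\dots,y_k):=\bigl(a,\Phi(w_1,\dots,w_k)\bigr)\in[q]^{k+1}
\]
is asymptotically uniform, with every point having probability $q^{-(k+1)}(1+o(1))$. Two inputs give this. First, apply \cref{fact:residues} with $\ell=2k-1$ and $I=[n]$ to the order statistics of the $(2k-1)$-set $L_{k-1}(e)\cup R_k(e)$. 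Since $q\le 2\ln n=o(n)$, the joint residue vector of these $2k-1$ elements is uniform up to a factor $1+O(q/n)$. Second, the map from this residue vector to $(a,y_1,\dots,y_k)$ is a linear surjection from $(\mathbb Z/q\mathbb Z)^{2k-1}$ onto $(\mathbb Z/q\mathbb Z)^{k+1}$. Its $a$-coordinate depends only on the first $k-1$ coordinates, and summing them is onto. The $y$-coordinates form $\Phi$, which is a bijection by \cref{lem:deletion-map} because $\gcd(q,k-1)=1$. Every fibre therefore has the same size, so the push-forward of an almost uniform distribution is almost uniform.

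Note that $e$ is not a uniform $(2k-1)$-set: it is an $r$-set, and for odd $r+1$ there is one extra middle element. I would handle this by conditioning on that middle element, or by noting that the map $e\mapsto L_{k-1}(e)\cup R_k(e)$ has fibres whose sizes depend only on the gap. A cleaner option is to condition on the middle elements $x_k,\dots,x_{r-k}$ and apply \cref{fact:residues} separately to the left block in an interval below them and the right block in an interval above them. Conditionings that leave intervals of length $o(n)$ have probability $o(1)$ and are absorbed. This bookkeeping is the main technical obstacle, though it is routine.

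Finally, for $(a,y)$ uniform on $[q]^{k+1}$, I compute $\bbP(a\le\min_i y_i)=\sum_{t=1}^{q} q^{-1}\bigl((q-t+1)/q\bigr)^k$. Comparing the sum with $\int_0^1 x^k\,dx$ from above gives at most $\frac{1}{k+1}(1+1/q)^{k+1}$, as the bound $\sum_{m=1}^q m^k\le (q+1)^{k+1}/(k+1)$ shows. The multiplicative $1+o(1)$ errors, summed over $q^{k+1}$ points, contribute only $o(1)$ in total. Doubling for Condition~(ii) yields the stated bound, and since $q\ge\ln n\to\infty$ it equals $\frac{2}{k+1}+o(1)$.
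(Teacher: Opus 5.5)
Your proposal is correct and follows the paper's proof: you pass to the weaker $f$-event, use \cref{fact:residues} together with the bijectivity of $\Phi$ from \cref{lem:deletion-map} to show that the residue vector $(a,y_1,\dots,y_k)$ is uniform on $[q]^{k+1}$ up to a factor $1+O_r(q/n)$, and then evaluate $q^{-(k+1)}\sum_{j=1}^q j^k$. The one difference is the middle element, which you handle by conditioning (a valid fix); the paper avoids this step by applying \cref{fact:residues} with $\ell=r$ to the whole uniformly random $e$, where the middle residue is just a free coordinate of a surjective linear map, so all fibres still have equal size.
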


\begin{proof}
By Lemma~\ref{lem:turan-system}, the $r$-graph $G(n,r,\preceq_q)$ is a Tur\'an $(n,r+1,r)$-system. It remains to estimate its size. Choose $e=\{x_1<\cdots<x_r\}$ uniformly at random from $\binom{[n]}r$. Let $E_L$ be the event that
$
 f(L_{k-1}(e))
 \le
 f(R_{k}(e)\setminus\{x_i\})
$
for every $x_i\in R_{k}(e)$,
and let $E_R$ be the event that
$
 f(R_{k-1}(e))
 \le
 f(L_{k}(e)\setminus\{x_i\})
$
for every $x_i\in L_{k}(e)$.
Since $\preceq_q$ orders sets first by their $f$-values, every member of $G(n,r,\preceq_q)$ satisfies $E_L$ or $E_R$. Therefore
$$
 \frac{|G(n,r,\preceq_q)|}{\binom nr}
 \le\bbP(E_L)+\bbP(E_R).
$$

We first estimate the probability $\bbP(E_L)$. Let
$
 u:=f(L_{k-1}(e))
$
and, after writing $R_{k}(e)=\{z_1<\cdots<z_k\}$, let
$v_i:=f(R_{k}(e)\setminus\{z_i\})$ for $i\in[k]$.
The sets $L_{k-1}(e)$ and $R_{k}(e)$ are disjoint. If the residues of $x_1,\ldots,x_r$ were independent and uniform modulo $q$, then $u$ would be uniform on $[q]$. Moreover, by Lemma~\ref{lem:deletion-map}, the vector $(v_1,\ldots,v_k)$ would be uniform on $[q]^k$ and independent of $u$. By Fact~\ref{fact:residues}, the actual joint distribution differs from this uniform distribution by a multiplicative factor $1+O_r(q/n)$, uniformly over all residue patterns. More explicitly, the map from the full residue vector of $e$ to $(u,v_1,\ldots,v_k)$ has equally sized fibres: the sum defining $u$ is uniform, the deletion map is bijective, and any unused residue is free. Consequently,
\[
 \begin{aligned}
 \bbP(E_L)
 =\frac{1+O_r(q/n)}{q^{k+1}}
   \sum_{j=1}^q j^k
 \le
 \frac1{k+1}\left(1+\frac1q\right)^{k+1}+o(1),
 \end{aligned}
\]
where we used $\sum_{j=1}^qj^k\le\int_0^{q+1}x^k\,dx$ and $q=o(n)$.

By symmetry, the same bound applies to $\bbP(E_R)$.
Combining these two inequalities proves the first assertion.
Since $q\to\infty$, the second assertion follows.
\end{proof}
Now we prove \cref{thm:main-explicit}.
\begin{proof}[Proof of \cref{thm:main-explicit}]
For $r\ge3$, the result follows from Lemma~\ref{lem:explicit}, since
$k=\lfloor(r+1)/2\rfloor$. If $r=2$, then we can take all members of
$\binom{[n]}2$; this is a direct, explicit Tur\'an $(n,3,2)$-system
and gives the stated bound. 

It remains to verify the assertion about the computational complexity of deciding the inclusion of a given $r$-set in the Tur\'an system. For $r\ge3$, the integer $q$ and the order $\preceq_q$ are determined by $n$ and $r$. Given an $r$-set $e\subseteq[n]$, first sort its elements as $e=\{x_1<\cdots<x_r\}$. Membership in $G(n,r,\preceq_q)$ can then be decided by computing $O(r)$ subset sums modulo $q$ and making $O(r)$
lexicographic comparisons between sets of size $k-1$. This takes time polynomial in $r\log_2 n$. For $r=2$, membership is immediate.
This completes the proof.
\end{proof}

\section{The case when \texorpdfstring{$s\ge r+2$}{s >= r+2}}
\label{sed:LargerGap}
Finally, we discuss the case when $s=r+\rho$ with $\rho\ge 2$. For fixed $\rho\ge 2$, Frankl and R\"odl \cite{FranklRodl1985}
proved that
\[
 t(r+\rho,r)
 \le
 \frac{(1+o(1))\rho(\rho+4)\ln r}{\binom{r+\rho}{\rho}},\quad \text{as $r\to\infty$.}
\]
Pikhurko~\cite[Theorem~1.2 and Corollary~1.3]{Pikhurko2025} removed the factor $\ln r$, proving that
\[
 t(r+\rho,r)
 \le
 \frac{\mu_\rho+o(1)}{\binom{r+\rho}{\rho}},\quad \text{as $r\to\infty$,}
\]
where $\mu_\rho$ depends only on $\rho$ and can be chosen so that
$\mu_\rho=(1+o(1))\rho\ln \rho$ as $\rho\to\infty$. Liu and Pikhurko 
\cite[Theorem~2(ii)]{LiuPikhurko2026} further
extended this estimate to every function $\rho=\rho(r)$ satisfying
$\rho\to\infty$ and $\rho=o(\sqrt r)$. Bounds for larger $\rho$ (containing again $\ln r$ as a factor) can be found in \cite{LiuPikhurko2026, Sidorenko1997}.

The argument in \cref{sec:proof} can be extended to every $\rho$, giving the following.

\begin{proposition}\label{prop:general-R}
Let $n,r,\rho$ be positive integers such that $n\ge r+\rho$ and
$r\ge \rho\ge2$. Then
\[
 T(n,r+\rho,r)
 \le
 \frac{2}{1+\binom{\lfloor(r+\rho)/2\rfloor}{\rho}}\binom nr.
\]
\end{proposition}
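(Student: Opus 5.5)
The plan is to generalise \cref{def:set-system}. Set $k:=\lfloor (r+\rho)/2\rfloor$ and take a total order $\preceq$ on $\binom{[n]}{k-\rho}$. Let $G$ consist of those $e\in\binom{[n]}r$ satisfying at least one of two conditions. Condition~(i) is that $L_{k-\rho}(e)\preceq Y$ for every $Y\in\binom{R_k(e)}{k-\rho}$; Condition~(ii) is the mirror statement with $R_{k-\rho}(e)$ and $\binom{L_k(e)}{k-\rho}$.

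This needs $k-\rho\ge 0$ and $r\ge 2k-\rho$. Indeed $r\ge\rho$ gives $k\ge\rho$, and $2k\le r+\rho$ gives $r\ge 2k-\rho$, so $L_{k-\rho}(e)$ and $R_k(e)$ are disjoint subsets of $e$. If $k=\rho$, the sets in question are empty, and the bound $2/(1+1)=1$ is trivial. So we may assume $k>\rho$.

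\textbf{Tur\'an property.} Following \cref{lem:turan-system}, take an $(r+\rho)$-set $X$. Since $2k\le r+\rho$, the sets $L_k(X)$ and $R_k(X)$ are disjoint. Let $W$ be the $\preceq$-minimum of $\binom{L_k(X)}{k-\rho}\cup\binom{R_k(X)}{k-\rho}$, and assume by symmetry that $W\subseteq L_k(X)$. Put
\[
e:=W\cup M\cup R_k(X),
\]
where $M$ is the middle part $X\setminus(L_k(X)\cup R_k(X))$, of size $r+\rho-2k\in\{0,1\}$. Then $|e|=(k-\rho)+(r+\rho-2k)+k=r$, and $e\subseteq X$.

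Every element of $W\cup M$ lies below every element of $R_k(X)$. Hence $L_{k-\rho}(e)=W$ and $R_k(e)=R_k(X)$. Minimality of $W$ then gives Condition~(i).

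\textbf{Size.} Following \cref{lem:random}, choose $\preceq$ uniformly at random and fix $e$. The family $\{L_{k-\rho}(e)\}\cup\binom{R_k(e)}{k-\rho}$ consists of $1+\binom{k}{k-\rho}=1+\binom k\rho$ distinct sets, by disjointness. So Condition~(i) holds with probability exactly $1/(1+\binom k\rho)$, and the same holds for Condition~(ii). A union bound gives $\bbP(e\in G)\le 2/(1+\binom k\rho)$. Summing over $e$ and choosing an order achieving at most the expectation yields the claimed bound on $T(n,r+\rho,r)$.

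\textbf{Main obstacle.} The only step needing care is the size bookkeeping in the Tur\'an step: the middle part must have size $0$ or $1$, and the choice of $k$ must guarantee that $L_{k-\rho}(e)$ and $R_k(e)$ are disjoint inside $e$. Both follow from $r+\rho-1\le 2k\le r+\rho$ together with $k\ge\rho$.
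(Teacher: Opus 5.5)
Your proposal is correct and follows the paper's proof essentially step for step. It uses the same $k=\lfloor(r+\rho)/2\rfloor$ with the trivial case $k=\rho$, the same random order on $\binom{[n]}{k-\rho}$ and the same two conditions. It also builds the same edge $e=W\cup M\cup R_k(X)$ for the Tur\'an property, and computes the same probability $1/(1+\binom{k}{\rho})$ per condition before the union bound.
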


\begin{proof}
Set $k=\lfloor(r+\rho)/2\rfloor$. If $k=\rho$, then the claimed inequality
is just the trivial bound $T(n,r+\rho,r)\le\binom nr$. Thus suppose that
$k>\rho$.

Recall that $L_\ell(X)$ and $R_\ell(X)$ denote respectively the initial and final $\ell$-subsets of $X\subseteq[n]$ with respect to the standard order $\le$ on $[n]$. Let $\preceq$ be an arbitrary total order on $\binom{[n]}{k-\rho}$.
Let $G_\rho(n,r,\preceq)$ denote the $r$-graph on $[n]$ where we include $e$ if at least one of the following conditions holds:
\begin{enumerate}
\renewcommand{\labelenumi}{(\roman{enumi})}
\item $L_{k-\rho}(e)\preceq Y$ for every
$Y\in\binom{R_{k}(e)}{k-\rho}$, or
\item $R_{k-\rho}(e)\preceq Z$ for every
$Z\in\binom{L_{k}(e)}{k-\rho}$.
\end{enumerate}

We first check that $G_\rho(n,r,\preceq)$ is a Tur\'an
$(n,r+\rho,r)$-system. Take any $(r+\rho)$-subset
$X=\{x_1<\cdots<x_{r+\rho}\}$ of $[n]$. Since $2k\le r+\rho$, the sets $L_k(X)$ and
$R_k(X)$ are disjoint. Let $W$ be the minimum member of
\[
 \binom{L_k(X)}{k-\rho}\cup\binom{R_k(X)}{k-\rho}
\]
with respect to $\preceq$. By symmetry, assume that
$W\in\binom{L_k(X)}{k-\rho}$. If $r+\rho=2k$, set $e:=W\cup R_k(X)$; if
$r+\rho=2k+1$, set $e:=W\cup\{x_{k+1}\}\cup R_k(X)$. Then $|e|=r$,
$L_{k-\rho}(e)=W$, and $R_{k}(e)=R_k(X)$, so the $\preceq$-minimality of $W$ implies that
$e\in G_\rho(n,r,\preceq)$. 

Finally, choose $\preceq$ uniformly at random. For a fixed
$e\in\binom{[n]}r$, the set $L_{k-\rho}(e)$ is disjoint from $R_{k}(e)$, so the
$1+\binom{k}{\rho}$ sets compared in Condition (i) are distinct.
Likewise, $R_{k-\rho}(e)$ is disjoint from $L_{k}(e)$, so the analogous statement
holds for Condition (ii). Hence each of the two defining
conditions has probability $1/{(1+\binom{k}{\rho})}$.
Consequently,
\[
 \bbE\,|G_\rho(n,r,\preceq)|
 \le
 \frac{2}{1+\binom{k}{\rho}}\binom nr.
\]
Take an ordering for which the number of edges in $G_\rho(n,r,\preceq)$ is at most its expected value.
\end{proof}

For every fixed $\rho\ge2$, Proposition~\ref{prop:general-R} gives
\[
 t(r+\rho,r)
 \le
 \frac{2^{\rho+1}+o_r(1)}{\binom{r+\rho}{\rho}},
 \quad\text{as }r\to\infty.
\]
For $\rho=2$, this gives the constant $8$, improving the constant
$9.267...$ coming from \cite[Theorem~1.2]{Pikhurko2025}. However, Proposition~\ref{prop:general-R} produces weaker bounds than those from \cite[Theorem~1.2]{Pikhurko2025} already from $\rho=3$.
Thus, for $\rho\ge 3$, the advantage of Proposition~\ref{prop:general-R} is in the simple and non-recursive nature of the construction.

\section*{Acknowledgements}

Jun Gao and Oleg Pikhurko were supported by ERC Advanced Grant 101020255; Peiru Kuang was supported by a subproject of the AI for Math and Science Program;
Yan Wang was supported by the National Key R\&D Program of China under Grant No.~2022YFA1006400 and by the National Natural Science Foundation of China under Grant No.~12571376. This work was initiated and partly carried out during the research visits funded by the SJTU--Warwick Joint Seed Fund.

\section*{AI disclosure}

ChatGPT 5.5 was able to autonomously improve (modulo some fixable gaps) the constant $4.911$ from~\cite{Pikhurko2025} to
$4.45...$ by suggesting a more complicated version of the construction from~\cite{Pikhurko2025}. Subsequent human--AI interaction led to the simple construction
presented here. AI tools were also used for improving the draft written by the authors. 

\bibliographystyle{abbrv}
\bibliography{bib}
\end{document}